\theoremstyle{plain}
\newtheorem{thm}{Theorem}[section]
\newtheorem{lem}[thm]{Lemma}
\newtheorem{prop}[thm]{Proposition}
\newcommand\di[1]{\hbox{$#1$-dim}}
\theoremstyle{definition}
\newtheorem{defin}[thm]{Definition}
\newtheorem{rem}[thm]{Remark}
\newtheorem{remark}[thm]{Remark}
\newtheorem{ex}[thm]{Example}
\DeclareMathOperator{\as}{{\rm asdim}}
\DeclareMathOperator{\ANas}{{\rm AN-asdim}}
\DeclareMathOperator{\las}{{\rm \ell-asdim}}
\def\cu{{\mathcal U}}
\def\ovx{{\overline{x}}}
\def\ovy{{\overline{y}}}
\renewcommand{\paragraph}{%
\@startsection{paragraph}{4}%
{\z@}{0.80ex \@plus 1ex \@minus .2ex}{-1em}%
{\normalfont\normalsize\bfseries}%
}
\begin{document}
\title{\textsc{On Asymptotic Dimension with Linear Control}}
\author{Corry M Bedwell}

\address{Corry Bedwell, Department of Mathematics, University
of Florida, 358 Little Hall, Gainesville, FL 32611-8105, USA}
\email{corrybedwell@gmail.com}

\subjclass[2010]
{Primary: 51F99, 20F69 Secondary: 54D35}

\begin{abstract} We construct a countable p-local group with a proper invariant metric
whose Assouad-Nagata dimension is strictly greater that the asymptotic dimension with
linear control. This solves Problem 8.6 from the list~\cite{Dr}.

We study asymptotic dimension with linear control $\las_{\omega}$ that depends on 
a fixed ultrafilter $\omega$
on $\mathbb N$. It turns out that the asymptotic Assouad-Nagata dimension is the supremum of
$\las_{\omega}$ on all $\omega$ and the asymptotic dimension with linear control is the minimum
of $\las_{\omega}$ over all $\omega$.
\end{abstract}

\maketitle

\section{Introduction}

The asymptotic dimension was defined by Gromov to study the finitely generated groups~\cite{Gr}.
His definition can be applied to general metric spaces, though we prefer to consider discrete metric spaces. 
One of the main examples of metric spaces are graphs with the length one of each edge. Our main examples are  the sets of vertices of  graphs possibly with a rescaled metric. In particular, we consider the discrete interval of the length $n$, a metric space $I(n)$ isomorphic to $[0,n]\cap\mathbb N$ and the discrete circle $S(n)$ of the length $n$, i.e. the set of vertices
of a cycle graph of length $n$. For $a>0$ by $I_a(n)$ and $S_a(n)$ we denote the spaces $I(n)$ and
$S(n)$ with the metrics multiplied by $a$.

{\em The asymptotic dimension} $\as X$ of a metric space $X$
does not exceed $n$, $\as X\le n$ if for any $\lambda<\infty$ there are $n+1$ uniformly bounded $\lambda$-disjoint families $\cu^0,\dots,\cu^n$ of subsets of $X$ such that $\cu^0\cup\dots\cup\cu^n$ covers $X$.
Thus, in the above definition, $\cu^i=\{U_{\alpha}^i\}_{\alpha\in A}$ and
$dist(U_{\alpha}^i,U_{\beta}^i) > \lambda$ for $\alpha\ne\beta$,
and there is $D<\infty$ such that ${\mathrm{diam}}(U_\alpha^i)\le D$ for  all
$\alpha\in A$ and all $i$~\cite{DS}. 

In this case we say that the dimension of $X$ on a scale $\lambda$ with the control $D$
does not exceed $n$, $\di{(\lambda,D)}X\le n$. We say $\di{(\lambda,D)}X=n$ if $\di{(\lambda,D)}X\le n$ and
the conditions for $\di{(\lambda,D)}X\le n-1$ cannot be fulfilled.

\begin{ex} \label{ex}
$$\di{(a,na)}I_a(k)=1=\di{(a,na)}S_a(l)$$ for $k\ge n+1$ and $l\ge 2n+1$, while
$$
 \di{(\lambda,0)}I_a(m)=0=\di{(\lambda,0)}S_a(r)$$
 for $\lambda<a$ and any $m,r\in\mathbb N$.
\end{ex}

The {\em asymptotic dimension with linear control}~\cite{Dr} $\las_* X$ of a metric space $X$ is defined as follows:
$\las_* X\le n$ if there is $c>0$ such that for every $R<\infty$ there is $\lambda>R$ such that
$\di{(\lambda,c\lambda)}X\le n$.

We note that in~\cite{Dr} this dimension was denoted as $\las X$.

The {\em asymptotic Assouad-Nagata dimension }(~\cite{DH}~\cite{DSm}~\cite{BDLM}) $\ANas X\le n$ if there is $c>0$ such that
$\di{(\lambda,c\lambda)}X\le n$ for $\lambda>r_0$ for some $r_0$. We note that for discrete metric spaces
where the distance between any pair of distinct points is greater than some fixed positive number the 
condition $\lambda>r_0$ can be dropped. In that case the asymptotic Assouad-Nagata dimension coincides with
the classical  Assouad-Nagata dimension~\cite{As} $\dim_{AN}X$.

{\bf Problem} (Problem 8.6 \cite{Dr}).
{\em Does ${\las}_* X=\ANas X$? What if $X$ is a finitely generated group?}

In this paper we give a negative answer to the first part of this question. Our counter-example is
a countable p-local group with a proper invariant metric. The case of finitely generated groups remains open.

The decisive property of our counter-example $X$ is that the function
$f(\lambda)=\di{(\lambda,c\lambda)}X$ has different limits with respect to different
ultrafilters $\omega$ on $\mathbb N$.
In the second part of the paper we investigate what kind of dimension of $X$ we obtain by taking the limits of
$f(\lambda)=\di{(\lambda,c\lambda)}X$ with respect to an ultrafilter.
We denote such invariant of as $\las_{\omega}(X)$ and
 show that $$\las_* X=\min_{\omega}\{\las_{\omega} X\}$$
and $$\ANas X=\sup_{\omega}\{\las_{\omega} X\}.$$

\textbf{Acknowledgments:} I would like to thank Alexander Dranishnikov for his valuable advice throughout this paper.

\section{Counter-example}

We begin by constructing a rather tame metric space. The motivation for starting here is to highlight and extract the core idea that gives rise to the counterexample via a group. 

For two subsets $U,V\subset X$ of a metric space $(X,d)$ we demote by $d(U,V)=\inf\{d(u,v)\mid u\in U, v\in V\}$.
\begin{defin}
Let $A$ be a collection of subsets of a metric space $X$. Then for $\lambda > 0 $ we say $A$ is \textit{$\lambda$-disjoint} if $d(U,V) > \lambda$ for all $U,V \in A$ and $U \not = V$. Also for  $D \geq 0$ we say $A$ is \textit{$D$-uniformly bounded} if $diam(U) \leq D$ for all $U \in A$.  
\end{defin}

We call a metric space {\em $\lambda$-discrete} if $d(x,y)\ge\lambda$ for all $x\ne y$.
\paragraph*{}
Our goal is to recursively construct a sequence of finite metric spaces 
$$
Y_1\subset Y_2\subset\dots\subset Y_n\subset\dots$$ and isometric embeddings
and a monotone sequence of tending to infinity real numbers $(a_n)_{n=1}^{\infty}$  such that for $X_n=Y_n\setminus Y_{n-1}$ the
following conditions are satisfied:
\begin{itemize}
\item[(I)] $X_n$ is $a_n$-discrete;
\item[(II)] $\di{(a_n,ca_n)}X_n \geq 1$ for  $c \le n$;
\item[(III)] $diam(Y_{n-1})< a_n$;
\item[(IV)] $d(Y_{n-1},X_n)\ge a_n$.
\end{itemize}

\begin{prop}\label{idea}
Let $X = \bigcup _{n=1}^{\infty}Y_n=\coprod_{n=1}^{\infty}X_n$ be given the natural metric that comes from $Y_n$s. Then $\las_{*} X = 0$ and $\ANas X\ge 1$.
\end{prop}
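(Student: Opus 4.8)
The plan is to establish the two bounds separately, each a short deduction from (I)--(IV) together with the nesting $Y_1\subseteq Y_2\subseteq\cdots$ and the monotone growth of $(a_n)$. For $\las_* X\le 0$ I would take the linear constant $c=1$. Given $R<\infty$, choose $n$ so large that $a_n>R$ (possible since $a_n\to\infty$) and fix a scale $\lambda$ with $\max\{R,\operatorname{diam}(Y_{n-1})\}<\lambda<a_n$; such $\lambda$ exists because $\operatorname{diam}(Y_{n-1})<a_n$ by (III). The crucial observation is that $X\setminus Y_{n-1}=\coprod_{m\ge n}X_m$ is $a_n$-discrete: two distinct points of one $X_m$ lie $\ge a_m\ge a_n$ apart by (I), while if $x\in X_m$ and $y\in X_k$ with $n\le m<k$ then $x\in Y_m\subseteq Y_{k-1}$, so $d(x,y)\ge d(Y_{k-1},X_k)\ge a_k\ge a_n$ by (IV); the same reasoning gives $d(Y_{n-1},X\setminus Y_{n-1})\ge a_n$. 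Hence the single family $\{Y_{n-1}\}\cup\{\{x\}:x\in X\setminus Y_{n-1}\}$ covers $X$, has all its pairwise distances $\ge a_n>\lambda$, and has all its diameters $\le\operatorname{diam}(Y_{n-1})<\lambda$; this is exactly $\di{(\lambda,c\lambda)}X\le 0$ with $c=1$. As $R$ was arbitrary, $\las_* X\le 0$, hence $\las_* X=0$ since $X\ne\emptyset$.

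For $\ANas X\ge 1$ I would first record the elementary monotonicity $\di{(\lambda,D)}A\le\di{(\lambda,D)}B$ whenever $A\subseteq B$: intersecting with $A$ the $D$-bounded $\lambda$-disjoint families witnessing $\di{(\lambda,D)}B\le n$ keeps them $D$-bounded and $\lambda$-disjoint and still covering $A$. Suppose now, for contradiction, that $\ANas X\le 0$, witnessed by some $c>0$ and some $r_0$, so that $\di{(\lambda,c\lambda)}X\le 0$ for every $\lambda>r_0$. Pick $n$ with $n\ge c$ and $a_n>r_0$. Monotonicity applied with $A=X_n\subseteq X=B$ at scale $\lambda=a_n$ and control $D=ca_n$ gives $\di{(a_n,ca_n)}X_n\le 0$, contradicting (II), which applies because $c\le n$. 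Hence $\ANas X\ge 1$.

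The genuine difficulty lies not in this proof but in the recursive construction that realizes (I)--(IV) simultaneously --- above all (II), the demand that $X_n$ fail to be $0$-dimensional even when clusters of size up to $na_n$ are permitted; this is exactly where the rescaled intervals and circles of Example~\ref{ex} enter, used at the $n$-th stage with $a=a_n$ and with length growing in $n$, while (III)--(IV) are arranged by placing the new piece $X_n$ far enough from the bounded set $Y_{n-1}$ and at a large enough scale. Within the present argument the only point requiring care is the distance bookkeeping showing $X\setminus Y_{n-1}$ is $a_n$-discrete and $a_n$-separated from $Y_{n-1}$, which is immediate from $Y_{n-1}\subseteq Y_{m-1}$ for $m\ge n$ together with (I) and (IV).
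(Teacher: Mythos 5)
Your proposal is correct and follows essentially the same route as the paper: for the upper bound you exhibit, at a scale just below $a_n$, a single disjoint bounded family consisting of $Y_{n-1}$ together with singletons (the paper covers $X_n$ by an auxiliary $(a_n-1)$-disjoint family rather than by singletons, an immaterial difference since $X_n$ is $a_n$-discrete), and for the lower bound you derive the same contradiction with (II) for $n\ge c$, merely making explicit the monotonicity of $\di{(\lambda,D)}$ under passing to subsets that the paper uses implicitly.
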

\begin{proof}
The conditions  $(I)$,$(III)$, and $(IV)$, imply that for the sequence $\lambda_n = a_n -1$, $$\di{(\lambda_n,\lambda_n)}X = 0.$$ Indeed, the cover $\mathcal U$ that consists of $Y_{n-1}$, an $(a_n-1) $-disjoint $(a_n-1)$-bounded 
cover $\mathcal V=\{V_i\}$ of $X_n$, and singletons in $X\setminus Y_n$ is $(a_n-1)$-disjoint and $(a_n-1)$-bounded.
Hence $\las_{*} X = 0$. 

On the other hand $(II)$  forces $\ANas X \geq 1$. Indeed, if $\ANas X <1$, then there is $c>0$ such that $\di{(\lambda,c\lambda)}X\le 0$. Therefore, $\di{(\lambda,c\lambda)}X_n\le 0$ for all $n$.
This contradicts with (II) for $n\ge c$.
\end{proof}

Now we construct the sequences of metric spaces and real numbers that satisfy these properties. 
We set $a_1 = 1$ and $X_1=\{1\}\times\{0,1,2\}\subset\mathbb R\times\mathbb R$. Assume that $a_{n-1}$ 
and $Y_{n-1}\subset[0,a_{n-1}]\times\mathbb R$ have been defined for some $n$. Choose $a_n$ so that 
$a_n > diam (Y_{n-1})+1$.
Let $$X_n = \{\sum_{i=1}^{n}a_{i}\} \times I_{a_n}(n+1)\subset\mathbb R\times\mathbb R.$$ 
Note that the conditions I and III-IV are obviously satisfied.  The condition (II) follows from Example~\ref{ex}.

We set $Y_n=Y_{n-1}\cup X_n$ and note that $Y_n\subset[0,a_n]\times\mathbb R$.

We give our space
$X = \bigcup_{i=1}^{\infty}X_i \subset\mathbb{R}^2$
the inherited metric from $\mathbb{R}^2$ endowed with the $\ell_1$ metric. Figure 1 below depicts the first few stages of construction of $X$. 
\paragraph*{} 
\setlength{\unitlength}{10cm}
\begin{picture}(2,.75)
\put(.23,.1){\vector(1,0){1}}
\put(.23,.1){\vector(0,1){.5}}
\put(.33,.1){\circle*{.015}}
\put(.33,.12){\circle*{.015}}
\put(.33,.05){\line(0,1){.04}}
\put(.33,.07){\line(1,0){.1}}
\put(.43,.05){\line(0,1){.04}}
\put(.93,.05){\line(0,1){.04}}
\put(.93,.07){\line(1,0){.1}}
\put(1.03,.05){\line(0,1){.04}}
\put(.355,.025){$a_2$}
\put(.955,.025){$a_n$}
\put(.43,.1){\circle*{.015}}
\put(.43,.12){\circle*{.015}}
\put(.43,.14){\circle*{.015}}
\put(.405,.234){$4 a_3$}
\put(.51,.1){\line(0,1){.1325}}
\put(.49,.2325){\line(1,0){.04}}
\put(.53,.1){\circle*{.015}}
\put(.53,.14){\circle*{.015}}
\put(.53,.18){\circle*{.015}}
\put(.53,.22){\circle*{.015}}
\put(.63,.1){\circle*{.015}}
\put(.63,.22){\circle*{.015}}
\put(.63,.34){\circle*{.015}}
\put(.63,.46){\circle*{.015}}
\put(.63,.58){\circle*{.015}}
\put(.755,.28){\circle*{.0105}}
\put(.78,.28){\circle*{.0105}}
\put(.805,.28){\circle*{.0105}}
\put(.83,.475){$a_{n-1}$}
\put(.93,.1){\circle*{.015}}
\put(.93,.35){\circle*{.015}}
\put(.91,.3625){\line(1,0){.04}}
\put(.93,.3625){\line(0,1){.225}}
\put(.91,.5875){\line(1,0){.04}}
\put(.93,.6){\circle*{.015}}
\put(1.03,.1){\circle*{.015}}
\put(1.01,.1125){\line(1,0){.04}}
\put(1.03,.1125){\line(0,1){.475}}
\put(1.01,.5875){\line(1,0){.04}}
\put(1.03,.6){\circle*{.015}}
\put(1.055,.475){$a_n$}
\put(1.155,.28){\circle*{.0105}}
\put(1.18,.28){\circle*{.0105}}
\put(1.205,.28){\circle*{.0105}}
\put(.65,.025){Figure 1}
\end{picture}

\paragraph*{}

\begin{remark}\label{rem}
Clearly, there are many ways to construct the above $X_n$. One of the options is to build $X$
as a subset of the infinite metric wedge of rays $\vee_n (\mathbb R_+)_n$. It suffices to take $X_n$ as before  to be
isometric to the discrete interval $I_{a_n}(n+1)$. To satisfy the condition (IV) one should shift $I_{a_n}(n+1)$ along the ray $(\mathbb R_+)_n$ by $a_n$.
Then by Proposition~\ref{idea} $X=\vee_nX_n^+$ satisfies $\las_{*} X = 0$ and $\ANas X\ge 1$ where
$X_n^+=X_n\cup 0$ and $0\in(\mathbb R_+)_n$ is the wedge point.

Moreover, the the conditions I-IV are satisfied if every $X_n^+$ is replaced by an $a_n$-discrete circle $S_{a_n}(2n+2)$ with a base point. 
Then $X$ would be  the infinite wedge of discrete circles of increasing radii, $\vee_nS_{a_n}(2n+2)$.
\end{remark}

Let $\mathbb{Z}_{m}$ denote the group of integers modulo $m$. It is generated by one element $\bar 1$ with corresponding Cayley graph $X_m$ being a circle with $m$ edges.  Then the group $\mathbb Z_m$ with the word metric can be identified with the discrete circle $S(m)$. We will denote the distance between $x,y\in\mathbb Z_m$ in this metric as $|x-y|_m$. Then the {\em $a$-weighted metric} $d_a$ on $\mathbb Z_m$ is given by the formula $d_a(x,y)=a|x-y|_m$. Note that $0$ is a natural base point for $\mathbb Z_m$.
Then the last part of Remark~\ref{rem} can be deformed into the following:

\begin{prop}\label{wedge} 
Given a prime number $p$, there is a
monotone sequence of natural numbers $(a_n)$ tending to infinity such that $\ANas(X)\ge 1$ and $\las_*(X)=0$ where $X=\vee_n(\mathbb Z_{p^{n}},d_{a_n})$
\end{prop}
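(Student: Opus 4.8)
The plan is to recycle, almost word for word, the last construction of Remark~\ref{rem}, replacing the discrete circle $S_{a_n}(2n+2)$ by the weighted cyclic group $(\mathbb Z_{p^n},d_{a_n})=S_{a_n}(p^n)$ pointed at $0$. Thus I would set $Y_0=\{0\}$, let $Y_n=\vee_{i=1}^n(\mathbb Z_{p^i},d_{a_i})$ be the metric wedge obtained by gluing all base points to $0$, put $X_n=Y_n\setminus Y_{n-1}$, and let $X=\bigcup_n Y_n=\vee_n(\mathbb Z_{p^n},d_{a_n})$ with the metric inherited from the $Y_n$'s. Once natural numbers $a_1<a_2<\cdots$ are chosen so that conditions (I)--(IV) of the set-up hold, Proposition~\ref{idea} yields $\las_*X=0$ and $\ANas X\ge 1$, which is exactly the claim.

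The recursion is the one already used in this section. Suppose $a_1,\dots,a_{n-1}$ have been chosen. The space $Y_{n-1}$ is finite and its metric is integer-valued, so $\operatorname{diam}Y_{n-1}$ is a well-defined natural number; I would pick any $a_n\in\mathbb N$ with $a_n>\operatorname{diam}Y_{n-1}$. Condition (III) then holds by construction. Monotonicity and $a_n\to\infty$ follow from $a_n>\operatorname{diam}Y_{n-1}\ge\operatorname{diam}(\mathbb Z_{p^{n-1}},d_{a_{n-1}})\ge a_{n-1}$. Condition (I) holds because the least positive distance in $(\mathbb Z_{p^n},d_{a_n})$ equals $a_n$. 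Condition (IV) holds because $0\in Y_{n-1}$ while the closest point of $X_n$ to $0$ is a neighbour of $0$ on the $n$-th circle, at distance exactly $a_n$; hence $d(Y_{n-1},X_n)=a_n$. All of these verifications are routine.

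The one genuinely new point is condition (II), and it is where I expect the (modest) content of the argument to lie: in Remark~\ref{rem} one was free to use circles with $2n+2$ vertices, whereas now the number of vertices of the $n$-th circle is forced to be the power $p^n$. I would deal with this as follows. The space $X_n$ is the pointed circle $S_{a_n}(p^n)$ with one vertex removed; deleting a vertex changes neither its diameter, which stays $\lfloor p^n/2\rfloor a_n$, nor its behaviour on the scale $a_n$ (any $a_n$-disjoint family covering $X_n$ must still lump all of $X_n$ into a single member, since consecutive vertices are at distance exactly $a_n$). Hence the computation of Example~\ref{ex} applies and gives $\di{(a_n,na_n)}X_n\ge 1$, and therefore $\di{(a_n,ca_n)}X_n\ge 1$ for every $c\le n$ by monotonicity in the control parameter, as soon as $p^n\ge 2n+2$. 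Since $p\ge 2$, this inequality fails for at most finitely many $n$.

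That finitely many small indices may violate (II) is harmless, and this is the last thing I would note: the proof of Proposition~\ref{idea} that $\las_*X=0$ uses only (I), (III), (IV) (it covers each $X_n$ by its singletons on the scales $\lambda_n=a_n-1$), while its proof that $\ANas X\ge 1$ needs, for any prospective constant $c$, only a single index $n\ge c$ with $a_n$ large and (II) valid --- and such an index exists. So the whole proof is essentially bookkeeping; the only step deserving a second look is the verification that the powers $p^n$ outgrow the linear bound required by (II), which they plainly do.
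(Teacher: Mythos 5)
Your proposal is correct and follows the same route as the paper: the paper's proof is just the two-line version of your argument (choose $a_n>\operatorname{diam}(\vee_{i=1}^{n-1}(\mathbb Z_{p^i},d_{a_i}))$ recursively, check (I)--(IV), and note that $p^n\ge 2(n+1)$ holds eventually). Your extra care about the deleted base point in $X_n$ and about finitely many small $n$ violating (II) fills in details the paper leaves implicit, but adds no new idea.
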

\begin{proof}
We define $a_n$ recursively by the condition
$diam(\vee_{i=1}^{n-1}(\mathbb Z_{p^{i}},d_{a_i}))< a_n$. Then all the conditions I-IV will be satisfied.
To be formal, one needs the inequality $p^{n}\ge 2(n+1)$ which holds for $p>2$ and holds eventually for $p=2$. 
\end{proof}

Let $(X_n,d_n)$ be a sequence of metric spaces with base points $x_n^0$.  We define the metric space
$\bigoplus X_n$ to be the subset of $\prod X_n$ that consists of the $\omega$-tuples which are eventually the base points. We consider the $\ell_1$-metric on $\bigoplus X_n$: $$d((x_n),(y_n))=\sum_{n=1}^{\infty} d_n(x_n,y_n).$$

\begin{lem}\label{product} Suppose that $X_n$ is a sequence of metric spaces with base points and
$k\in\mathbb N$ is such that the metric spaces $X_j$ are $\lambda$-discrete for all $j > k$.  Then for any $D_0$ and $D \geq diam(\prod_{i=1}^{k-1}X_i)$
$$\di{(\lambda,D + D_{0})}(\bigoplus X_n)\leq \di{(\lambda,D_{0})}X_k $$
 for the $\ell_1$-metric on the product $\prod_{i=1}^{k-1}X_i$.
\end{lem}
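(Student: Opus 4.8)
The plan is to present $\bigoplus X_n$, up to isometry, as the $\ell_1$-product $P\times X_k\times T$, where $P=\prod_{i=1}^{k-1}X_i$ (with the $\ell_1$-metric) and $T=\bigoplus_{j>k}X_j$, and then transplant to it a cover of $X_k$. Two structural observations make this work. First, the splitting is isometric precisely because the metric on $\bigoplus X_n$ is $\ell_1$, so every distance decomposes as $d_P+d_k+d_T$, and $P$ has $\ell_1$-diameter $\le D$ by hypothesis. Second, $T$ is $\lambda$-discrete: two distinct $\omega$-tuples of $T$ differ in some coordinate $j>k$, where $X_j$ is $\lambda$-discrete, so their $\ell_1$-distance is at least $\lambda$. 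This is the only place the hypothesis on the $X_j$ with $j>k$ enters.

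Put $n=\di{(\lambda,D_0)}X_k$ (if this is infinite there is nothing to prove) and fix $\lambda$-disjoint, $D_0$-bounded families $\cu^0,\dots,\cu^n$ of subsets of $X_k$ whose union covers $X_k$. For $i=0,\dots,n$ set
\[
\cw^i=\{\,P\times U\times\{c\}\ :\ U\in\cu^i,\ c\in T\,\}.
\]
Since the $\cu^i$ together cover $X_k$ and the singletons $\{c\}$ exhaust $T$, the families $\cw^0,\dots,\cw^n$ together cover $P\times X_k\times T=\bigoplus X_n$; and each member $P\times U\times\{c\}$ has $\ell_1$-diameter at most $\operatorname{diam}(P)+\operatorname{diam}(U)+0\le D+D_0$. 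Hence, once each $\cw^i$ is shown $\lambda$-disjoint, the $\cw^i$ witness exactly $\di{(\lambda,D+D_0)}(\bigoplus X_n)\le n=\di{(\lambda,D_0)}X_k$.

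The $\lambda$-disjointness of $\cw^i$ is the heart of the argument and reduces to a two-case computation. For distinct members $P\times U\times\{c\}$ and $P\times U'\times\{c'\}$ the distance between them equals $d_k(U,U')+d_T(c,c')$, because the $P$-coordinates may be taken equal. If $U\ne U'$ this is $\ge d_k(U,U')>\lambda$ by the $\lambda$-disjointness of $\cu^i$; if $U=U'$, then $c\ne c'$ and it is $\ge d_T(c,c')\ge\lambda$ by the $\lambda$-discreteness of $T$ (in the intended applications this separation is in fact strict, since there the $X_j$ with $j>k$ are discrete with gap larger than $\lambda$). The main obstacle, such as it is, is exactly this bookkeeping: arranging the factors so that the bounded head $P$ is merely carried along — it only adds the nonnegative term $d_P$ to every distance and its diameter to every bound — while the $\lambda$-discrete tail $T$ costs nothing in the bounds yet supplies the needed separation between members sharing an $X_k$-component, so that the scale $\lambda$ is preserved rather than degraded. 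Equivalently, one may package the proof as two one-line sublemmas applied to $\bigoplus X_n=P\times(X_k\times T)$: an $\ell_1$-factor of diameter $\le D$ raises the control of $\di{(\lambda,\cdot)}$ by at most $D$, and a $\lambda$-discrete $\ell_1$-factor leaves $\di{(\lambda,\cdot)}$ unchanged.
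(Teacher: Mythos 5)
Your proof is correct and follows essentially the same route as the paper: both transplant the optimal cover of $X_k$ to sets of the form $\prod_{i=1}^{k-1}X_i\times U\times\{c\}$ with $c$ ranging over the tail $\bigoplus_{j>k}X_j$, bound diameters by $D+D_0$ via the $\ell_1$-decomposition, and verify $\lambda$-disjointness by the same two-case analysis (distinct $U$'s versus distinct tails). The only point worth noting is the $\geq\lambda$ versus $>\lambda$ subtlety in the tail case, which the paper glosses over and you explicitly flag as harmless in the intended application.
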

\begin{proof}
Assume that $\di{(\lambda,D_{0})}X_k \leq n$. Let $\cu^{0}_k,...,\cu^{n}_k$ be the $\lambda$-dijiont  $D_0$-uniformly bounded families whose union cover $X_k$. For $i=0,\dots,n$ we define  a family of sets in $\bigoplus X_n$ as follows
$$
\cu^i=\{\prod_{i=1}^{k-1}X_i\times U\times (\bar x_{>k})\ \mid U\in\cu^i_k,\  (\bar x_{>k})\in\bigoplus_{n=k+1} ^{\infty}X_n\},
$$ 
where $(\bar x_{>k})= (x_{k+1},x_{k+2},...)$. Clearly the union of the families $\cu^0,\dots, \cu^n$ is a cover of $\bigoplus X_n$.

First we show that these families are uniformly $(D+D_0)$-bounded. Let $\bar y,\bar z\in\prod_{i=1}^{k-1}X_i\times U\times (\bar x_{>k})$ with $U\in \cu_k^i$.
Then,
$$d(\bar y,\bar z) = \sum^{k}_{i=1}d(z_i,y_i) = (\sum^{k-1}_{i=1}d(z_i,y_i)) + d_k(z_k,y_k) \leq diam(\prod_{i=1}^{k-1}X_i) + d_k(z_k,y_k) \leq D + D_0.$$

Next we show that each family $\cu^i$ is $\lambda$-disjoint. Let $\bar y\in\prod_{i=1}^{k-1}X_i\times U\times (\bar y_{>k})$ and $\bar z\in\prod_{i=1}^{k-1}X_i\times V\times (\bar z_{>k})$ with $U, V\in \cu_k^i$. 
We have to check two cases:
\begin{enumerate}
\item[\textbf{Case 1}]$U \not = V$. In this case we have that $$d_k(\bar y,\bar z) \geq d(y_k,z_k) > \lambda$$
as $\cu^{i}_k$ is $\lambda$-disjoint. 
\item[\textbf{Case 2}]$(\bar y_{>k})\not = (\bar z_{>k})$. In this case, there is $j > k$ so that $y_j \not = z_j$ and hence,
$$d(\bar y,\bar z) \geq d_j(w_j,z_j) > \lambda$$
as $X_j$ is $\lambda$-disjoint for all $j > k$.
\end{enumerate}
\end{proof}

Let $G = \bigoplus_{i=1}^{\infty} \mathbb{Z}_{p^{i}}$. For any sequence $(a_n)_{n=1}^{\infty}\subset\mathbb{N}$ we define the metric $d : G\times G \rightarrow \mathbb{R}$ by,

\begin{equation*}
d(\ovx,\ovy) = \sum_{i=1}^{\infty} a_{i}|x_{i}-y_{i}|_{p^i} 
\end{equation*}
We recall that a metric space is called {\em proper} if every closed ball in that metric is compact.
\paragraph*{}
\begin{prop} Let $G = \bigoplus_{i=1}^{\infty} \mathbb{Z}_{p^{i}}$. Then  the metric on $G$ defined above for $a_n\to\infty$ is proper and invariant.
\end{prop}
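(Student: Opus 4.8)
The plan is to verify the three assertions in turn — that $d$ is a well-defined metric, that it is invariant, and that it is proper — reducing everything to the corresponding coordinatewise statements for the word metrics $|\cdot|_{p^i}$ on the cyclic groups $\mathbb{Z}_{p^i}$.

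First I would check that $d$ is a genuine metric. An element of $G=\bigoplus_i\mathbb{Z}_{p^i}$ is eventually the base point $0$, so for $\ovx,\ovy\in G$ only finitely many terms $a_i|x_i-y_i|_{p^i}$ are nonzero and the defining sum converges. Symmetry and the triangle inequality then follow termwise from the fact that each $|\cdot|_{p^i}$ is a metric on $\mathbb{Z}_{p^i}$, after multiplying by the positive weights $a_i$ and summing. Positive definiteness uses $a_i\ge 1>0$: if $d(\ovx,\ovy)=0$ then every $|x_i-y_i|_{p^i}=0$, hence $\ovx=\ovy$.

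Next, for invariance I would use that for each $i$ the translation $t\mapsto t+g_i$ is an isometry of $(\mathbb{Z}_{p^i},|\cdot|_{p^i})$, so $|(x_i+g_i)-(y_i+g_i)|_{p^i}=|x_i-y_i|_{p^i}$; summing the weighted terms gives $d(\ovx+\ovg,\ovy+\ovg)=d(\ovx,\ovy)$ for every $\ovg\in G$, and since $G$ is abelian this is two-sided invariance.

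The one place that needs a little thought is properness. Since every $a_i\in\mathbb{N}$ satisfies $a_i\ge 1$, distinct points of $G$ lie at distance at least $1$, so $(G,d)$ is uniformly discrete and "closed balls are compact" is equivalent to "closed balls are finite". By invariance it suffices to treat a ball $\bar B(0,r)$ about the identity, and here the hypothesis $a_n\to\infty$ does the work: choose $N$ with $a_i>r$ for all $i>N$; then any $\ovx$ with $\sum_i a_i|x_i|_{p^i}\le r$ must have $x_i=0$ for all $i>N$, so $\bar B(0,r)$ is contained in the finite subgroup $\bigoplus_{i=1}^{N}\mathbb{Z}_{p^i}\le G$ and is therefore finite, hence compact. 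I expect this last step — the observation that properness here is really the statement that a bounded set is supported on finitely many coordinates, which is exactly what $a_n\to\infty$ guarantees — to be the only non-formal point in the argument.
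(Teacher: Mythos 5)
Your argument is correct, and it diverges from the paper in one substantive place. The invariance computation is the same in both: translation acts coordinatewise as an isometry of each $(\mathbb{Z}_{p^i},|\cdot|_{p^i})$, and the weighted sum is preserved (the paper writes this out termwise exactly as you do; commutativity of $G$ makes it two-sided). For properness, however, the paper simply cites the general fact from Smith's paper \cite{Sm} that weighting the generators of a countably generated group by a sequence tending to infinity yields a proper metric, whereas you prove it directly: since every $a_i\ge 1$ the space is uniformly discrete, so compactness of closed balls reduces to finiteness, and $a_n\to\infty$ forces any ball $\bar B(0,r)$ to be supported on the finitely many coordinates with $a_i\le r$, hence contained in the finite subgroup $\bigoplus_{i=1}^{N}\mathbb{Z}_{p^i}$. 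This self-contained argument is arguably preferable here, since it isolates exactly where the hypothesis $a_n\to\infty$ is used; the paper's citation buys brevity and situates the fact in its general context. Your preliminary check that $d$ is a well-defined metric (finitely many nonzero terms, termwise triangle inequality, positive definiteness from $a_i\ge 1$) is omitted in the paper but is a harmless and correct addition.
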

\begin{proof}
It is well known that a metric defined on a countably generated group by weighting its generators with a sequence of numbers tending to infinity is proper (see~\cite{Sm}). The invariance can be easily verified:
Let $\ovx,\ovy \in G$, then
$$
d((\ovx,\overline{g}),(\ovy,\overline{g})) = d(\ovx + \overline{g},\ovy + \overline{g}) = \sum_{i=1}^{\infty} a_{i}|(x_{i}+g_{i})-(y_{i}+g_{i})|_{p^i} =$$
$$ \sum_{i=1}^{\infty} a_{i}|x_{i}+g_{i}-g_{i}-y_{i})|_{p^i} = \sum_{i=1}^{\infty} a_{i}|x_{i}-y_{i}|_{p^i} = d(\ovx,\ovy).
$$
\end{proof}

We recall that a group $G$ is called {\em $p$-local} if every finitely generated subgroup $H\subset G$ is
a $p$-torsion group. 

\paragraph*{}
\begin{thm}\label{main}
There exists a countable $p$-local group $G$ with a proper invariant metric, such that $\ANas G$  differs from 
$\las_* G$.
\end{thm}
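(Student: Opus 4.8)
The plan is to take $G=\bigoplus_{i=1}^\infty \mathbb{Z}_{p^i}$ with the proper invariant metric $d(\ovx,\ovy)=\sum_i a_i|x_i-y_i|_{p^i}$, where the sequence $(a_n)$ is chosen exactly as in Proposition~\ref{wedge} (recursively, so that $\mathrm{diam}(\bigoplus_{i=1}^{n-1}\mathbb{Z}_{p^i}) < a_n$, with the harmless adjustment ensuring $p^n\ge 2(n+1)$ so the circle $S(p^n)$ has dimension-$1$ behavior on scale $a_n$). That $G$ is countable, $p$-local (every finitely generated subgroup is a finite $p$-group), and carries a proper invariant metric has already been established in the preceding propositions, so the only thing left is the dimension inequality $\las_* G < \ANas G$, which I will get by proving $\las_* G = 0$ and $\ANas G \ge 1$.

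For the upper bound $\las_* G = 0$, I would apply Lemma~\ref{product}. Fix $c>0$; I must produce arbitrarily large $\lambda$ with $\di{(\lambda,c\lambda)}G \le 0$. Given $R$, pick $n$ large (so that $n \ge c$ and $a_n$ is large), set $k=n+1$ and $\lambda=a_{n+1}-1$ — wait, more carefully: take $\lambda$ slightly below $a_{n+1}$, say any $\lambda$ with $a_n \le \lambda < a_{n+1}$ works for making the tail factors $\mathbb{Z}_{p^j}$, $j\ge n+1$, $\lambda$-discrete after rescaling; actually the cleanest is to mimic Proposition~\ref{idea}: the factor $\mathbb{Z}_{p^j}$ in the $\ell_1$-sum is $a_j$-discrete, and $a_j \ge a_{n+1} > \lambda$ for $j \ge n+1$ once $\lambda < a_{n+1}$. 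Then $\di{(\lambda,0)}$ of each tail factor vanishes ($\lambda$-discrete spaces have dimension $0$ with control $0$ on scale $\lambda$), and Lemma~\ref{product} with $k=n+1$, $D_0 = 0$, $D = \mathrm{diam}(\prod_{i=1}^{n}\mathbb{Z}_{p^i})$ gives $\di{(\lambda, D)}G \le 0$. By the recursive choice of the $a_i$'s, $D < a_{n+1}$, and one can arrange $D \le c\lambda$ by choosing $n$ large enough so that $D < a_{n+1} \approx \lambda$ and indeed $D \le c\lambda$ eventually (since $\lambda$ can be taken $\ge a_{n+1}/2$, say, and $D < a_{n+1}$, so $D/\lambda$ is bounded — here I should be a bit careful and possibly shrink to $\lambda \in [a_{n+1}-1, a_{n+1}-1]$, i.e. $\lambda = a_{n+1}-1$, giving $D/\lambda < a_{n+1}/(a_{n+1}-1) \to 1 \le c$ if $c \ge 1$; for $c<1$ one inflates $a_{n+1}$ in the recursion to beat $c$, which costs nothing). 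Hence $\di{(\lambda,c\lambda)}G \le 0$ for these arbitrarily large $\lambda$, so $\las_* G = 0$.

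For the lower bound $\ANas G \ge 1$, suppose $\ANas G = 0$: there is $c>0$ and $r_0$ with $\di{(\lambda,c\lambda)}G \le 0$ for all $\lambda > r_0$. Fix $n \ge \max(c, r_0\text{-index})$; consider the isometrically embedded copy of $(\mathbb{Z}_{p^n}, d_{a_n})$ inside $G$ (the coordinate subgroup, which is an isometric subspace since the other coordinates are fixed at $0$). A dimension-$0$ cover of $G$ on scale $a_n$ with control $ca_n$ restricts to one of this subspace, so $\di{(a_n, ca_n)}(\mathbb{Z}_{p^n},d_{a_n}) \le 0$. But $(\mathbb{Z}_{p^n},d_{a_n}) = S_{a_n}(p^n)$ and, since $p^n \ge 2(n+1) \ge 2c+1$ (for $n\ge c$ and $n$ large), Example~\ref{ex} gives $\di{(a_n, ca_n)}S_{a_n}(p^n) = 1$ (the circle is too long to be split into $ca_n$-bounded $a_n$-disjoint pieces with a $0$-dimensional cover). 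This contradiction forces $\ANas G \ge 1$.

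Combining the two bounds, $\las_* G = 0 < 1 \le \ANas G$, so the two invariants differ, and $G$ is the required countable $p$-local group with a proper invariant metric. The one point demanding care — the main obstacle — is the bookkeeping in the $\las_*$ argument: one must verify that the \emph{same} constant $c$ works for all large $\lambda$, i.e. that the ratio (control)/(scale) $= D/\lambda$ produced by Lemma~\ref{product} stays bounded by a fixed $c$ as $n\to\infty$. This is exactly what the recursive choice $a_{n+1} > \mathrm{diam}(\bigoplus_{i=1}^n \mathbb{Z}_{p^i}) + 1$ (strengthened to $a_{n+1} > c^{-1}\mathrm{diam} + 1$ if $c<1$) is designed to guarantee, and once that is in place everything else is a direct citation of Lemma~\ref{product}, Example~\ref{ex}, and the argument of Proposition~\ref{idea}.
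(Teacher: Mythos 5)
Your proposal is correct and follows essentially the same route as the paper: $\las_* G=0$ via Lemma~\ref{product} applied at scales just below $a_n$ (the paper uses $\lambda_n=a_n-1$, $k=n$, $D_0=\lambda_n$, yielding $c=2$), and $\ANas G\ge 1$ by restricting a hypothetical $0$-dimensional cover to a coordinate circle $S_{a_n}(p^n)$ and invoking Example~\ref{ex}, which is exactly the content of Propositions~\ref{idea} and~\ref{wedge} that the paper cites. The only blemish is the opening ``Fix $c>0$'' in your $\las_*$ argument — the definition lets you \emph{choose} $c$, and your own computation shows $c=2$ suffices, so the worry about small $c$ is unnecessary.
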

\begin{proof} Let $G$ be defined as above with the sequence $(a_n)$  as in Proposition~\ref{wedge}.
Note that $G=\bigoplus S_{a_n}(p^{n})$.

First we show that $\las_* G=0$. Let $R>0$ be given. Let $n$ be large enough so that $\lambda_n =a_n-1> R$  we show that $\di{(\lambda_n, 2\lambda_n)} G = 0$. Note that for each $i \geq n$ if $x,y \in \mathbb{Z}_{p^{i}}$ with $x\not = y$, then 
$$d(x,y) = a_i|x - y| \geq a_i \geq a_n > \lambda_n.$$
Also note that by construction of $G$, $$\lambda_n=a_n-1 \geq \prod_{i=1}^{n-1}\mathbb Z_{p^i}.$$ Therefore by Lemma~\ref{product},
\begin{equation}
\di{(\lambda_n,2\lambda_n)}G \leq \di{(\lambda_n, \lambda_n)} (S_{a_n}(p^n))=0.
\end{equation} 
For the last equality see Example~\ref{ex}.

Note that $\ANas G \ge\ANas\vee S_{a_n}(p^n)\geq 1$ by Proposition~\ref{wedge}. \end{proof}  

\section{Dimension that depends on ultrafilter}

\paragraph*{}
As a convention and for notational ease, we let $\mathbb{N}^* = \beta\mathbb{N}\setminus \mathbb{N}$, the set of non-principal ultrafilters on $\mathbb{N}$. Throughout the paper $\omega$ will refer to a non-principal ultrafilter and any unspecified ultrafilters are taken to be non-principal.

\paragraph*{}
For a fixed $c > 0$ and a metric space $(X,d)$ the function
$$f_c : \mathbb{N} \rightarrow \mathbb{N} \cup \{\infty\}=\alpha\mathbb N$$ defined
$f_c(\lambda) =\di{(\lambda,c\lambda)}X$  extends to the Stone-\v Cech compactification, 
$$\overline{f}_c : \mathbb{N}^* \rightarrow \alpha\mathbb N$$
where $\alpha\mathbb N$ is the one-point compactification and $\overline{f}_c(\omega) = \lim_{\omega}f_c$.

Finally we define asymptotic dimension with linear control of a metric space $X$ with respect to an ultrafilter $\omega\in\mathbb{N}^*$.
$$\las_{\omega}X=\min_c\{\bar f_c(\omega)\}.$$

In particular the definition implies that if $\las_{\omega}X = n$, then there is a $c > 0$ such that 
$$\lim_{\omega}f_{c} = n$$

\paragraph*{}
We note that  $\las_{*}X$ and $\ANas X$ are quasi-isometric invariant so that it suffices to check the $\lambda$-scaled dimension of $X$ over the $\lambda$ in $\mathbb{N}$ instead of over $\mathbb{R}$. 

\paragraph*{}
\begin{thm} For every metric space $X$ the following equality holds,
 $$\ANas X=sup\{\las_{\omega}(X) : \omega \in \mathbb{N}^*\}.$$
\end{thm}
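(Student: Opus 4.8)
The plan is to prove the two inequalities $\ANas X \le \sup_\omega \las_\omega X$ and $\sup_\omega \las_\omega X \le \ANas X$ separately, using the characterization of both quantities in terms of the family of functions $f_c(\lambda) = \di{(\lambda,c\lambda)}X$.

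For the inequality $\sup_\omega \las_\omega X \le \ANas X$, I would argue as follows. If $\ANas X = m < \infty$, then by definition there is a constant $c>0$ and an $r_0$ such that $\di{(\lambda,c\lambda)}X \le m$ for all $\lambda > r_0$; that is, $f_c(\lambda) \le m$ for all sufficiently large $\lambda$. Since every non-principal ultrafilter $\omega$ on $\mathbb N$ contains the cofinite filter, it follows that $\lim_\omega f_c \le m$, hence $\las_\omega X = \min_{c'} \bar f_{c'}(\omega) \le \bar f_c(\omega) \le m$ for every $\omega$. Taking the supremum over $\omega$ gives $\sup_\omega \las_\omega X \le m = \ANas X$. (If $\ANas X = \infty$ there is nothing to prove.)

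The reverse inequality $\ANas X \le \sup_\omega \las_\omega X$ is the substantive direction and is proved by contraposition: assuming $\ANas X > m$ (where $m = \sup_\omega \las_\omega X$, which we may assume finite), I produce an ultrafilter $\omega$ with $\las_\omega X > m$, i.e.\ with $\bar f_c(\omega) \ge m+1$ for \emph{every} $c>0$. The key observation is monotonicity in $c$: for $c \le c'$ one has $\di{(\lambda,c\lambda)}X \ge \di{(\lambda,c'\lambda)}X$, so $f_c \ge f_{c'}$ pointwise, and therefore $\bar f_c(\omega) \ge \bar f_{c'}(\omega)$ for all $\omega$. Hence it suffices to handle a cofinal increasing sequence $c_1 < c_2 < \cdots \to \infty$: if $\bar f_{c_k}(\omega) \ge m+1$ for all $k$ then $\bar f_c(\omega) \ge m+1$ for all $c$, since any $c$ is $\le$ some $c_k$. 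Now, since $\ANas X > m$, for each $k$ the hypothesis $\di{(\lambda,c_k\lambda)}X \le m$ eventually fails, so the set $S_k = \{\lambda \in \mathbb N : f_{c_k}(\lambda) \ge m+1\}$ is infinite. Because $f_{c_k} \ge f_{c_{k+1}}$, we have $S_k \supseteq S_{k+1}$, so $\{S_k\}$ is a decreasing sequence of infinite sets. By a standard diagonal/pseudo-intersection argument one extends the filter generated by $\{S_k\}$ together with the cofinite filter to a non-principal ultrafilter $\omega$; concretely, pick $\lambda_k \in S_k$ with $\lambda_1 < \lambda_2 < \cdots$, let $T = \{\lambda_k : k \ge 1\}$, note $T \setminus \{\lambda_1,\dots,\lambda_{k-1}\} \subseteq S_k$ for each $k$, and take any non-principal ultrafilter $\omega$ containing $T$. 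For such $\omega$ and each $k$, the set $\{\lambda : f_{c_k}(\lambda) \ge m+1\} \supseteq S_k \supseteq T \setminus (\text{finite set}) \in \omega$, so $\lim_\omega f_{c_k} \ge m+1$; by monotonicity $\bar f_c(\omega) \ge m+1$ for all $c$, giving $\las_\omega X \ge m+1 > m = \sup_\omega \las_\omega X$, a contradiction.

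The main obstacle is the reverse inequality, and within it the crucial point is getting the quantifiers right: $\ANas X$ is controlled by a \emph{single} $c$, while $\las_\omega X$ allows choosing $c$ depending on $\omega$, so one must build one ultrafilter that simultaneously witnesses the failure of the $m$-dimensional bound for \emph{all} large $c$. Monotonicity of $f_c$ in $c$ is exactly what makes this possible by reducing "all $c$" to "a countable cofinal set of $c$," after which the nesting $S_k \supseteq S_{k+1}$ lets a single ultrafilter do the job. One should also double-check the degenerate cases ($m = \infty$, or $\ANas X = \infty$) and confirm that $\las_\omega X$ is well-defined as a minimum over $c$ (the value $\bar f_c(\omega)$ is non-increasing in $c$ and takes values in $\alpha\mathbb N$, so the infimum is attained or equals $\infty$, and when it is finite the min is achieved for all large enough $c$).
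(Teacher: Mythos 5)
Your proof is correct, and the easy inequality $\sup_\omega \las_\omega X \le \ANas X$ is handled exactly as in the paper (a single $c$ works eventually, every non-principal ultrafilter contains the cofinite sets, done). For the substantive inequality $\ANas X \le \sup_\omega \las_\omega X$, however, you take a genuinely different route. The paper argues directly: writing $A_n = (\overline{f}_n)^{-1}(\{0,\dots,k\})$, the sets $A_n$ form an open cover of the compact space $\mathbb N^* = \beta\mathbb N\setminus\mathbb N$, a finite subcover yields a single integer $c = \max\{n_1,\dots,n_j\}$ (monotonicity of $f_c$ in $c$ is used here, implicitly), and the complement of the union in $\beta\mathbb N$ is a compact subset of the discrete space $\mathbb N$, hence finite, which produces the threshold $r$ witnessing $\ANas X\le k$. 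You instead argue by contraposition and build the ``bad'' ultrafilter by hand: monotonicity reduces all $c$ to a countable cofinal sequence $c_k$, the failure sets $S_k=\{\lambda : f_{c_k}(\lambda)\ge m+1\}$ are infinite and nested, and a pseudo-intersection $T$ of the chain sits inside a non-principal ultrafilter forcing $\overline{f}_{c_k}(\omega)\ge m+1$ for every $k$. The two arguments are logically dual and both hinge on the same two facts --- monotonicity of $\di{(\lambda,c\lambda)}X$ in $c$ and the reduction to countably many values of $c$ --- but the paper's version is topological (finite subcover in $\beta\mathbb N$) and yields the constant $c$ and threshold $r$ directly, while yours is combinatorial, needs only the existence of a non-principal ultrafilter containing a given infinite set, and exhibits explicitly the ultrafilter on which the dimension is large. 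Your attention to the degenerate cases ($m=\infty$, attainment of the minimum defining $\las_\omega$) is a point the paper glosses over.
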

\begin{proof} 
Assume that $\ANas X \leq k$. Then there is $c>0$ and $r\in\mathbb{N}$ such that for each $\lambda\geq r$,
$$\di{(\lambda,c\lambda)}X = k.$$
Let $A = [r,\infty)\cap \mathbb{N}$ and let $\omega \in \mathbb{N}^*$ be arbitrary. Note that  $A\in\omega$. Taking $c$ from above it follows that
$$\overline{f}_c(\omega) = \lim_{\omega}f_c \leq k.$$ Therefore $\las_{\omega}X \leq k$. Since $\omega$ was arbitrary, we obtain $$sup\{\las_{\omega}(X) : \omega \in \mathbb{N}^*\}\leq k.$$
Now assume that $sup\{\las_{\omega}(X) : \omega \in \mathbb{N}^*\} = k$. Fix $n\in\mathbb{N}$ and define $A_n = \{(\overline{f}_n)^{-1} (\{0,...,k\})\}$. Set, 
$$A = \bigcup_{n\in\mathbb{N}}A_n$$
It is clear that $A$ forms an open cover of $\mathbb{N}^*$. Since $\mathbb{N}^*$ is compact, $A$ admits a finite sub-cover, say
$$\{(\overline{f}_{n_{1}})^{-1} (\{0,...,k\}),...,(\overline{f}_{n_{j}})^{-1} (\{0,...,k\})\}$$
Take $c=max\{n_{1},...,n_{j}\}$ and set $A^{'} = (\overline{f}_{n_{1}})^{-1} (\{0,...,k\})\cup,...,\cup(\overline{f}_{n_{j}})^{-1} (\{0,...,k\})$. Let $B = \beta\mathbb{N}\setminus A^{'}$. Observe that $B$ is closed in $\beta\mathbb{N}$ and hence compact, and further $B$ is a subset of $\mathbb{N}$. Thus $B$ is finite. As a consequence we obtain that there is an $r > 0$ so that $[r,\infty)\cap\mathbb{N}\subset A^{'}$. Therefore taking $c>0$ from above we see,
$$f_c(\lambda) = \di{(\lambda,c\lambda)}X \leq k$$
For all $\lambda \in [r,\infty)\cap\mathbb{N}$.That is, $\ANas X \leq k$. 
\end{proof}

\paragraph*{}
\begin{thm} For every metric space $X$ the following equality holds, $$\las_* X=min\{\las_{\omega}(X) : \omega \in \mathbb{N}^*\}.$$
\end{thm}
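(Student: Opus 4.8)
The plan is to prove the two inequalities $\las_* X \le \min_\omega \las_\omega X$ and $\las_* X \ge \min_\omega \las_\omega X$ separately, and in the course of the second one actually produce an ultrafilter realizing the minimum. Throughout we work with $\lambda \in \mathbb{N}$, which is legitimate by the quasi-isometry-invariance remark preceding the theorem.

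For the inequality $\las_* X \le \min_\omega \las_\omega X$: suppose $\las_\omega X = k$ for some $\omega$. By definition of $\las_\omega$ there is a $c>0$ with $\lim_\omega f_c = k$. I would unwind the definition of the ultralimit: the set $S = f_c^{-1}(\{0,\dots,k\}) = \{\lambda \in \mathbb{N} : \di{(\lambda, c\lambda)}X \le k\}$ belongs to $\omega$, hence is infinite. So for every $R < \infty$ there is $\lambda > R$ with $\lambda \in S$, i.e. $\di{(\lambda,c\lambda)}X \le k$. This is exactly the statement $\las_* X \le k$ with the same constant $c$. Taking the minimum over $\omega$ gives the inequality.

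For the reverse inequality $\las_* X \ge \min_\omega \las_\omega X$ — which also includes showing the minimum on the right is attained: suppose $\las_* X = k$. Then there is $c > 0$ such that for every $R$ there is $\lambda > R$ with $\di{(\lambda, c\lambda)}X \le k$; equivalently the set $S = \{\lambda : f_c(\lambda) \le k\}$ is infinite. Pick any ultrafilter $\omega$ containing $S$ (an infinite subset of $\mathbb{N}$ is contained in some non-principal ultrafilter). Then $\lim_\omega f_c \le k$, so $\las_\omega X \le \bar f_c(\omega) \le k = \las_* X$. Combined with the first inequality applied to this $\omega$ (giving $\las_* X \le \las_\omega X$), we get $\las_\omega X = k$ and the minimum is attained at this $\omega$, so $\min_\omega \las_\omega X \le k = \las_* X$.

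The only genuinely delicate point is the clean translation between "$\lim_\omega f_c \le k$" and "$f_c^{-1}(\{0,\dots,k\}) \in \omega$" in the one-point compactification $\alpha\mathbb{N}$; I would state this as a small observation (the sets $\{0,\dots,k\}$ form a neighborhood basis... no, rather: for an $\alpha\mathbb{N}$-valued function, $\lim_\omega f \le k$ iff $f^{-1}(\{0,\dots,k\}) \in \omega$, since $\{0,\dots,k\}$ is clopen in $\alpha\mathbb{N}$) and use it in both directions. Everything else is a direct bookkeeping of the definitions, so there is no serious obstacle; the content of the theorem is really just that $\las_*$ quantifies "infinitely often at some fixed linear rate" while the ultrafilter $\omega$ is exactly the device that pins down such an infinite set of good scales.

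\begin{proof}
By the remark preceding the theorem we may restrict all scales $\lambda$ to $\mathbb{N}$. We first record the following elementary fact: since $\{0,1,\dots,k\}$ is a clopen subset of the one-point compactification $\alpha\mathbb{N}$, for any function $g : \mathbb{N} \to \alpha\mathbb{N}$ and any $\omega \in \mathbb{N}^*$ we have $\lim_\omega g \le k$ if and only if $g^{-1}(\{0,\dots,k\}) \in \omega$.

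\emph{Step 1: $\las_* X \le \las_\omega X$ for every $\omega \in \mathbb{N}^*$.} Fix $\omega$ and put $k = \las_\omega X$. By definition there is $c > 0$ with $\lim_\omega f_c = k$, so by the fact above the set $S = \{\lambda \in \mathbb{N} : \di{(\lambda,c\lambda)}X \le k\} = f_c^{-1}(\{0,\dots,k\})$ lies in $\omega$; in particular $S$ is infinite. Hence for every $R < \infty$ there is $\lambda \in S$ with $\lambda > R$, i.e. $\di{(\lambda, c\lambda)}X \le k$. This is precisely the definition of $\las_* X \le k = \las_\omega X$. Consequently $\las_* X \le \min\{\las_\omega X : \omega \in \mathbb{N}^*\}$.

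\emph{Step 2: the minimum is attained and equals $\las_* X$.} Put $k = \las_* X$ and choose $c > 0$ such that for every $R$ there is $\lambda > R$ with $\di{(\lambda, c\lambda)}X \le k$. Then the set $S = \{\lambda \in \mathbb{N} : f_c(\lambda) \le k\}$ is infinite, so it is contained in some non-principal ultrafilter $\omega_0$. Since $S \in \omega_0$, the fact above gives $\lim_{\omega_0} f_c \le k$, and therefore
$$\las_{\omega_0} X = \min_c \{\bar f_c(\omega_0)\} \le \bar f_c(\omega_0) = \lim_{\omega_0} f_c \le k = \las_* X.$$
Combined with Step 1 (applied to $\omega_0$), which gives $\las_* X \le \las_{\omega_0} X$, we conclude $\las_{\omega_0} X = k = \las_* X$. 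Hence $\min\{\las_\omega X : \omega \in \mathbb{N}^*\} \le \las_{\omega_0} X = \las_* X$, and together with Step 1 this yields $\las_* X = \min\{\las_\omega X : \omega \in \mathbb{N}^*\}$.
\end{proof}
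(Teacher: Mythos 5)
Your proposal is correct and follows essentially the same route as the paper: both directions hinge on the observation that the set of "good scales" $\{\lambda : f_c(\lambda)\le k\}$ is an infinite set, which either already lies in the given ultrafilter (giving $\las_* X\le\las_\omega X$) or can be placed inside some non-principal ultrafilter (giving the reverse inequality and attainment of the minimum). Your version is slightly more careful than the paper's in working with $f_c^{-1}(\{0,\dots,k\})$ rather than $f_c^{-1}(\{k\})$ and in making the clopen-neighborhood characterization of the ultralimit explicit, but these are refinements of the same argument.
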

\begin{proof} 
Assume that $\las_* X=k$. Then there is $c>0$ such that for each $i\in\mathbb{N}$ there is a $\lambda_i \geq i$ with
$$f_c(\lambda_i)= \di{(\lambda,c\lambda)}X \leq k$$ 
Let $A = \{\lambda_i\}_{i=1}^{\infty}$. There is $\omega\in\mathbb{N}^*$ so that $A \in\omega$. Then from this we have that 
$$\lim_{\omega}f_c \leq k$$ 
Hence $\las_{\omega}(X) \leq k$. Taking minimums on both sides we obtain the desired result,
$$min\{\las_{\omega}(X) : \omega \in \mathbb{N}^*\} \leq k.$$
On the other hand assume that $min\{\las_{\omega}(X) : \omega \in \mathbb{N}^*\} = k$. Then there is $\omega \in \mathbb{N}^*$ so that 
$$\las_{\omega}(X) = k.$$
Which by definition yields that there is a $c>0$ so that 
$$\lim_{\omega}f_c = k$$
Let $A = (f_c)^{-1}(\{k\})$. Note that as $\omega$ is non-principal, necessarily $A$ is infinite. Therefore it follows that for all $i\in\mathbb{N}$ there is a $\lambda_i \in A$ with $\lambda_i \geq i$ and
$$f_c(\lambda_i)= \di{(\lambda,c\lambda)}X \leq k$$
That is $\las_* X \leq k$, which completes the proof.
\end{proof}

The natural question arises, for a given metric space $X$ which values of $\las_{\omega}$ of $X$ can be obtained. The first part of the paper established that there is a countable group under which this dimension obtains two values. We end the paper with one final remark.

\begin{rem}
Using the same methods as in Theorem~\ref{main} one can construct a countably generated group such that there exists an ultrafilter $\omega$ with,
$$\las_* G < \las_{\omega} G < \ANas G.$$
Moreover, one can any finite set of integers as value of the dimension $\las_{\omega} G$.
\end{rem}

\end{document}